\documentclass[12pt]{article}
\usepackage[backref]{hyperref}
\usepackage{epsfig}
\usepackage{float}
\usepackage{graphicx}
\usepackage{subfigure}
\usepackage{tikz}
\usetikzlibrary{backgrounds,patterns}
\usepackage[normalem]{ulem}
\usepackage[margin=3cm]{geometry}
\usepackage{color}
\usepackage{enumitem}
\usepackage{amsfonts}
\newlist{enum*}{enumerate*}{2}%
\setlist[enum*]{label={(\roman*)}}
\newlist{enum}{enumerate}{2}%
\setlist[enum]{label=(\alph*), nosep}

\usepackage{amsmath, amssymb, latexsym, amsthm, color}

\newtheorem{question}{Question}
\newtheorem{theorem}{Theorem}

\newtheorem{claim}{Claim}

\newtheorem{lemma}[theorem]{Lemma}

\newtheorem{definition}{Definition}

	\begin{document}
	
	\title{\bf Girth and $\lambda$-choosability of graphs}
	\author{Yangyan Gu \thanks{Department of Mathematics, Zhejiang Normal University. Email: yangyan@zjnu.edu.cn} \and Xuding Zhu\thanks{Department of Mathematics, Zhejiang Normal University,  China.  E-mail: xdzhu@zjnu.edu.cn. Grant Number: NSFC 11971438,12026248, U20A2068.}}
	\date{\today}
	\maketitle
	
\begin{abstract}
	Assume $ k $ is a positive integer, $ \lambda=\{k_1,k_2,...,k_q\} $ is a partition of $ k $ and $ G $ is a graph. A $\lambda$-assignment of $ G $ is a $ k $-assignment $ L $ of $ G $ such that the colour set $ \bigcup_{v\in V(G)} L(v) $ can be partitioned into $ q $ subsets $ C_1\cup C_2\cup\cdots\cup C_q $ and for each vertex $ v $ of $ G $, $ |L(v)\cap C_i|=k_i $. We say $ G $ is $\lambda$-choosable if for each $\lambda$-assignment $ L $ of $ G $, $ G $ is $ L $-colourable. In particular,  if $ \lambda=\{k\} $, then $\lambda$-choosable is the same as $ k $-choosable, if $ \lambda=\{1, 1,...,1\} $, then $\lambda$-choosable is equivalent to $ k $-colourable. For the other partitions of $ k $ sandwiched between $ \{k\} $ and $ \{1, 1,...,1\} $ in terms of refinements, $\lambda$-choosability reveals a complex hierarchy of colourability of graphs. 
	Assume $\lambda=\{k_1, \ldots, k_q\} $ is a partition of $ k $ and $\lambda' $ is a partition of $ k'\ge k $. We write $ \lambda\le \lambda' $ if there is a partition $\lambda''=\{k''_1, \ldots, k''_q\}$ of $k'$ with $k''_i \ge k_i$ for $i=1,2,\ldots, q$ and 
	 $\lambda'$ is a refinement of   $\lambda''$. It follows from the definition that if $ \lambda\le \lambda' $, then every $\lambda$-choosable graph is $\lambda'$-choosable. It was proved in [X. Zhu, A refinement of choosability of graphs, J. Combin. Theory, Ser. B 141 (2020) 143 - 164] that the converse is also true. This paper strengthens this result and proves that for any $ \lambda\not\le \lambda' $, for any integer $g$, there exists a graph of girth at least $g$ which is $\lambda$-choosable but not $\lambda'$-choosable.
		
\end{abstract}

\section{Introduction}

A proper $ k $-colouring of a graph $ G $ is a colouring $ c : V(G)\rightarrow \{1,2,\ldots, k\} $ such that adjacent vertices receive different colours. The \emph{chromatic number} of $ G $ is the minimum integer $ k $ such that $ G $ has a proper $ k $-colouring. The {\em girth} of $G$ is the smallest length of cycles in $G$. If $G$ has girth $g$, then for any vertex $v$, the subgraph of $G$ induced by vertices at distance at most $g/2-1$ from $v$ is a tree. Hence large girth graphs  are ``locally'' 2-colourable. 
A natural question is whether locally $2$-colourable graphs can have large chromatic number. This question was answered in affirmative by Erd\H{o}s \cite{Erdos}:  For any positive integers $g,k$, there exists a  graph $G$ of   girth at least $g$ and   chromatic number at least $k$. This classical result is one of the most influential results in graph theory and has been generalized in many different ways. We may view the chromatic number as a scale that measures certain complexity of graphs. Erd\H{o}s'  result assures the existence of large girth graphs with given complexity with respect to this scale. By considering different measurements for graphs, one obtains various generalizations of this result. For example,  by consider the partial order of graph homomorphisms, it was proved in  \cite{JZ} that for any core graph $H$ and integers $g,t$, there exists a graph $G$ of girth at least $g$ such that homomorphism from $G$ to any   graph $H'$ of order at most $t$ are   composition of a homomorephism from $G$ to   $H$ and a homomorphism from $H$ to $H'$. 

This paper generalizes Erd\H{o}s' result with respect to a new measurement of colourability of graphs, which is a generalization of list colouring of graphs.  
An assignment of a graph $ G $ is a mapping $ L $ which assigns to each vertex $ v $ of $ G $ a set $ L(v) $ of permissible colours. A proper $ L $-colouring of $ G $ is a proper colouring $ f $ of $ G $ such that for each vertex $ v $ of $ G $, $ f(v)\in L(v) $. We say $ G $ is $ L $-colourable if $ G $ has a proper $ L $-colouring. A $ k $-assignment of $ G $ is a assignment $ L $ with $ |L(v)|=k $ for each vertex $ v $. We say $ G $ is $ k $-choosable if $ G $ is $ L $-colourable for any $ k $-assignment $ L $ of $ G $. The \emph{choice number} of $ G $ is the minimum integer $ k $ such that $ G $ is $ k $-choosable.

The concept of list colouring was introduced by Erd\H{o}s, Rubin and Taylor \cite{Erdos}, and independently by Vizing \cite{Viz} in the 1970’s, and provides a useful tool in many inductive proofs for upper bounds for the chromatic number of graphs, and motivates many challenging problems. There is a big gap between $ k $-colourability and $ k $-choosability. In particular, bipartite graphs can have arbitrary large choice number. A refinement of   the concept of choosability was introduced in \cite{Zhu}, which puts $ k $-choosability and $ k $-colourability in a same framework and considers a much more complex hierrachy of colourability of graphs. 

\begin{definition}
	A partition of a positive integer $ k $ is a finite multiset $ \lambda = \{k_1, k_2,..., k_q\} $ of positive integers with $  k_1+k_2+...+ k_q=k $. Each integer $ k_i \in \lambda $ is called a part of $\lambda$.
\end{definition}

\begin{definition}
	Assume $ \lambda = \{k_1, k_2,..., k_q\} $ is a partition of $ k $ and $ G $ is a graph. A $\lambda$-assignment of $ G $ is a $ k $-assignment $ L $ of $ G $ in which the colours in $ \cup_{x\in V(G)}L(x) $ can be partitioned into sets $ C_1, C_2,...,C_q $ so that for each vertex $ x $ and for each $ 1 \le  i \le  q $, $ |L(x)\cap C_i|=k_i $. Each $ C_i $ is called a colour group of $ L $. We say $ G $ is $\lambda$-choosable if $ G $ is $ L $-colourable for any $\lambda$-assignment $ L $ of $ G $.
\end{definition}

  Assume $\lambda$ and $\lambda'$ are two partitions of $ k $. We say $ \lambda' $ is a \emph{refinement} of $\lambda$ if $\lambda'$ is obtained from $\lambda$ by replacing some parts
  of $\lambda$ by partitions of 
  these parts. For example, $ \lambda'= \{2, 3, 4\} $ is a refinement of $ \lambda = \{4, 5\} $. It follows from the definition that if $\lambda'$ is a refinement of $\lambda$, then every $\lambda'$-assignment of a graph $ G $ is also a $\lambda$-assignment of $ G $. Hence every $\lambda$-choosable graph is $\lambda'$-choosable.

It is easy to see that if $ \lambda =\{1,1,..., 1\} $   consists of $ k $ copies of $ 1 $, then $\lambda$-choosable is the same as $ k $-colourable. On the other hand,   $ \{k\} $-choosable is the same as $ k $-choosable. So $\lambda$-choosability puts $ k $-colourability and $ k $-choosability of graphs under a same framework, and $\lambda$-choosability for those partitions $ \lambda $ of $ k $  sandwiched between $ \{k\} $ and $ \{1,1,...,1\} $ (in terms of refinements) reveal a complicated hierarchy of colourability of graphs.

\begin{definition}
 Assume $\lambda=\{k_1, \ldots, k_q\} $ is a partition of $ k $ and $\lambda' $ is a partition of $ k'\ge k $. We write $ \lambda\le \lambda' $ if there is a partition $\lambda''=\{k''_1, \ldots, k''_q\}$ of $k'$ with $k''_i \ge k_i$ for $i=1,2,\ldots, q$ and 
 $\lambda'$ is a refinement of   $\lambda''$.
\end{definition}

For example, $ \lambda= \{2, 2\} $ is a partition of $ 4 $, and $ \lambda'= \{1, 1, 1, 3\} $ is a partition of $ 6 $. Let $ \lambda''= \{2, 4\} $. Then $\lambda''$ is obtained from $\lambda$ by increasing one part of $\lambda$ by $ 2 $, and $\lambda'$ is a refinement of $\lambda''$. Hence $ \lambda\le \lambda' $

If $\lambda''$ is obtained from $\lambda$ by increasing some of parts of $\lambda$, then certainly every $\lambda$-choosable graph is $\lambda''$-choosable. If $\lambda'$ is a refinement of $\lambda''$, then every $\lambda''$-choosable graph is $\lambda'$-choosable. Therefore if $ \lambda\le \lambda' $, then every $\lambda$-choosable graph is $\lambda'$-choosable. It was proved in \cite{Zhu} that if 
$ \lambda\not\le \lambda' $, then there exists 
 a graph which is $\lambda$-choosable but not $\lambda'$-choosable.

\begin{theorem} \label{thm-Zhu} \cite{Zhu}
If $ \lambda\le\lambda' $, then every $ \lambda $-choosable graph is $ \lambda' $-choosable, and conversely, if every $ \lambda $-choosable graph is $ \lambda' $-choosable, then $ \lambda\le\lambda' $.
\end{theorem}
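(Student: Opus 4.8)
\emph{The plan.} The forward implication is essentially the argument sketched in the text, so I would dispatch it quickly: given $\lambda\le\lambda'$ witnessed by $\lambda''=\{k''_1,\ldots,k''_q\}$ and a $\lambda'$-assignment $L'$ of a $\lambda$-choosable graph $G$ with colour groups $C'_1,\ldots,C'_{q'}$, I merge the $C'_j$ along the refinement $\lambda'\to\lambda''$ into groups $D_1,\ldots,D_q$ with $|L'(v)\cap D_i|=k''_i\ge k_i$, select inside each $L'(v)\cap D_i$ a $k_i$-subset to form a $\lambda$-assignment $L$ with $L(v)\subseteq L'(v)$, and observe that an $L$-colouring (which exists since $G$ is $\lambda$-choosable) is an $L'$-colouring.

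For the converse I would start by unwinding the hypothesis. By the definition, $\lambda\le\lambda'$ holds exactly when the parts of $\lambda'=\{k'_1,\ldots,k'_{q'}\}$ can be distributed into $q$ blocks with the $i$-th block summing to at least $k_i$; so $\lambda\not\le\lambda'$ says every attempted distribution leaves some demand $k_i$ short. A one-line greedy argument (feed the parts one at a time to a block of currently largest residual demand) shows that a good distribution always exists once $q'\ge k$, so the hypothesis forces $q'\le k-1$. I would also record the elementary fact that $\chi(H)\le p$ implies $H$ is $\mu$-choosable for every partition $\mu$ into $p$ parts -- colour each $v$ by a colour of $L(v)$ lying in the colour group indexed by a fixed proper $p$-colouring of $H$ -- so in particular every bipartite graph is $\{a,b\}$-choosable for all $a,b\ge 1$.

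Next I would split into two cases. If $q>q'$, I take $G$ to be any graph with $q'+1\le\chi(G)\le q$ and $\mathrm{ch}(G)>k'$; such graphs are classical (for instance a clique $K_{q'-1}$ joined to a large complete bipartite graph, or the usual large-choice-number constructions with prescribed chromatic number). Then $G$ is $\lambda$-choosable by the fact above (as $\chi(G)\le q$), yet not even $k'$-choosable, hence not $\lambda'$-choosable. If instead $q\le q'$ -- the ``genuine'' covering failure, e.g. $q=q'$ with some sorted coordinate of $\lambda'$ below the corresponding one of $\lambda$ -- the chromatic shortcut is unavailable and I would construct a dedicated graph: hang one classical bipartite non-$(k'_j)$-choosability gadget (a complete bipartite graph listing all $(k'_j)$-subsets of a large pool on each side) off a common clique of order $q'$, so $\chi(G)=q'+1$, and build a $\lambda'$-assignment $L'$ with colour groups matched to the parts of $\lambda'$ so that any $L'$-colouring forces the $j$-th gadget to occupy at least $k'_j$ colours of some group, and the resulting pattern to realise a distribution of $k'_1,\ldots,k'_{q'}$ into $q$ groups with the $i$-th one ``fitting into room $k_i-1$'' -- which $\lambda\not\le\lambda'$ forbids -- so that $G$ is not $L'$-colourable. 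To prove $G$ is $\lambda$-choosable I would take an arbitrary $\lambda$-assignment with groups $C_1,\ldots,C_q$, note that each vertex now has a full $k_i$ colours in $C_i$ (one more than the fatal ``room $k_i-1$''), and colour the gadgets one at a time by induction on their number, peeling off a gadget with its share of the clique and using a group in which it still has $\ge k_i$ free colours -- the extra unit per group being precisely what breaks the chain of conflicts $L'$ was designed to force.

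The hard part, entirely in the case $q\le q'$, is twofold. First, engineering the gadgets and $L'$ so that ``$G$ is $L'$-colourable'' is provably equivalent to a bin-covering feasibility that $\lambda\not\le\lambda'$ rules out: this means converting the soft ``this gadget needs $\ge k'_j$ colours'' statements into honest additive bookkeeping over gadgets sharing a clique, and choosing the colour pools (hence the gadgets) large enough. Second, verifying $\lambda$-choosability against \emph{every} $\lambda$-assignment rather than just the dual of $L'$: the colouring induction must use only the structural feature that a $\lambda$-assignment supplies $q$ independent reservoirs with $k_i$ colours each at every vertex, and must exploit $\lambda\not\le\lambda'$ to ensure these reservoirs cannot all be driven into the ``room $k_i-1$'' regime simultaneously. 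Getting the two accountings to line up is the real content; everything else is bookkeeping.
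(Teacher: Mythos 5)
Your forward implication is fine and matches the remark preceding the theorem in the paper. The converse, however, has a genuine gap, and it already occurs in the case you treat as easy. For $q>q'$ you argue that a graph with $q'+1\le\chi(G)\le q$ and choice number exceeding $k'$ is ``not even $k'$-choosable, hence not $\lambda'$-choosable''. This implication runs the wrong way: every $\lambda'$-assignment is in particular a $k'$-assignment (indeed $\lambda'$ is a refinement of $\{k'\}$), so $k'$-choosability \emph{implies} $\lambda'$-choosability; failing $k'$-choosability is necessary, not sufficient, for failing $\lambda'$-choosability. Concretely, take $\lambda=\{1,1,1\}$ and $\lambda'=\{1,3\}$, so $q=3>q'=2$, $k'=4$ and $\lambda\not\le\lambda'$. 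Your prescribed graph $K_{q'-1}\vee K_{m,m}=K_1\vee K_{m,m}$ has $\chi=3=q'+1\le q$ and choice number greater than $4$ for large $m$, yet it \emph{is} $\{1,3\}$-choosable: given a $\{1,3\}$-assignment with groups $C_1,C_2$, colour the apex $v$ by some $c\in L(v)\cap C_2$; every bipartite vertex keeps its unique colour of $C_1$ and at least two colours of $C_2\setminus\{c\}$, so colouring one side entirely from $C_1$ and the other side from $C_2\setminus\{c\}$ is proper. Thus no graph selected purely by its chromatic number and choice number can settle this case; the witnessing assignment must exploit the group structure of $\lambda'$.

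The remaining case $q\le q'$, which you yourself flag as ``the real content'', is only a plan: both of the steps you identify (building a $\lambda'$-assignment whose colourability is equivalent to a bin-covering that $\lambda\not\le\lambda'$ forbids, and verifying $\lambda$-choosability against \emph{all} $\lambda$-assignments) are precisely the substance of the theorem and are not carried out, and it is doubtful that complete bipartite gadgets hung on a clique can deliver the $\lambda$-choosability half. For comparison, the construction in this paper (proving the stronger, large-girth Theorem~\ref{key}) avoids cliques and purely bipartite gadgets: for each part $k_i$ of $\lambda$ it uses a $(k_i-1)$-degenerate, non-$(k_i-1)$-colourable graph $J_i$ (Kostochka--Ne\v{s}et\v{r}il), so that $\lambda$-choosability follows from degeneracy within each part, while non-$\lambda'$-choosability comes from a counting argument over colour pools $C'_j$ of size $2k'_j-1$: any proper colouring of the constructed $\lambda'$-assignment forces pairwise disjoint ``occupied'' index sets $N_i$ with $\sum_{j\in N_i}k'_j\ge k_i$, which is exactly the distribution that $\lambda\not\le\lambda'$ rules out. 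That accounting device (pools of size $2k'_j-1$, together with an expansion/splitting lemma playing the role you assign to the clique) is the missing idea in your outline; without it, and with the $q>q'$ case as stated being incorrect, the converse is not established.
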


In this paper, we prove the following result, which strengthens Theorem \ref{thm-Zhu}, and generalizes Erd\H{o}s' result to the setting of $\lambda$-choosability of graphs.

\begin{theorem}\label{key}
	For any positive integer $ g $ and  $ \lambda\not \le \lambda' $, there exists a graph $ G $ of girth at least $ g $ which is $ \lambda $-choosable but not $ \lambda' $-choosable.
\end{theorem}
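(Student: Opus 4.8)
The plan is to combine the construction from Theorem~\ref{thm-Zhu} of \cite{Zhu} with a girth-amplifying device, in the spirit of Erd\H{o}s' probabilistic argument or the explicit high-girth constructions of Ne\v{s}et\v{r}il--R\"odl / Kneser-type graphs. Suppose $\lambda \not\le \lambda'$. By Theorem~\ref{thm-Zhu} there is a ``gadget'' graph $H$ that is $\lambda$-choosable but not $\lambda'$-choosable; fix a $\lambda'$-assignment $L_0$ of $H$ witnessing non-$L_0$-colourability. The difficulty is that $H$ itself may have small girth, so we cannot use it directly. The key idea is to build a graph $G$ of girth $\ge g$ together with a homomorphism-like ``folding'' $\phi \colon G \to H$ such that (a) $G$ still fails to be $\lambda'$-choosable — this is easy, since pulling back $L_0$ along $\phi$ gives a bad $\lambda'$-assignment of $G$ as long as $\phi$ maps edges to edges and is locally injective enough to preserve the colour-group structure — and (b) $G$ remains $\lambda$-choosable, which is the real obstacle.

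For step (b) I would prove a ``girth-robust'' version of the fact that $\lambda$-choosability is closed under the relevant operation. Concretely, one wants a lemma of the following shape: if $H$ is $\lambda$-choosable, then for every $g$ there is a graph $G_g$ of girth $\ge g$ admitting $\phi\colon G_g\to H$ with the property that \emph{any} $\lambda$-assignment $L$ of $G_g$ can be ``pushed down'' to an auxiliary $\lambda$-assignment structure on $H$ (or on small pieces of $H$) from which an $L$-colouring of $G_g$ is reconstructed. The natural route is an iterated/amalgamated construction: take many disjoint copies of a base gadget, glue them along a sparse high-girth pattern (e.g.\ the incidence structure of a generalized polygon, or a random sparse graph with the right girth), and argue by a compactness/entropy-counting argument that a bad $\lambda$-assignment on $G_g$ would force a bad $\lambda$-assignment on $H$ (or on $K_n$ for suitable $n$, contradicting $H$'s $\lambda$-choosability after a pigeonhole step on the finitely many ``colour-group types'' a vertex can see).

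The cleanest implementation is probably via the \emph{indicator / shift-graph} technique used in \cite{Zhu}: there $\lambda$-choosability of a graph is controlled by the colourability of a derived ``colouring-type'' digraph, and one shows $\lambda\not\le\lambda'$ yields a finite obstruction of bounded size. One then replaces each occurrence of that bounded obstruction by a high-girth graph that simulates the same constraint — replacing edges by long ``choosability gadgets'' of high girth that still transmit exactly the needed colour restrictions, analogous to how one builds high-girth graphs with prescribed chromatic number by subdividing with odd-girth-preserving blow-ups. Verifying that these gadgets transmit the $\lambda$-constraint but not more — i.e., they are $\lambda$-choosable ``in isolation'' yet rigid enough to carry the $\lambda'$-obstruction — is where the bulk of the work lies.

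The main obstacle, then, is step (b): guaranteeing that enlarging girth does not accidentally make $G$ $\lambda'$-choosable \emph{and} does not break $\lambda$-choosability. Increasing girth tends to make graphs ``more colourable'', so the construction must be delicately calibrated so that the $\lambda'$-obstruction survives while every $\lambda$-assignment is defeated. I expect the proof to hinge on a careful analysis of which colour-group patterns a vertex can locally witness, a pigeonhole reduction to finitely many such patterns, and a probabilistic or algebraic construction of a high-girth host graph realizing exactly those patterns; the routine parts (pulling back $L_0$, checking edge-preservation, girth of the host) I would defer.
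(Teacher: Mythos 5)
Your step (a) — the part you call ``easy'' — is where the proposal breaks down, and it is in fact the crux of the whole theorem. Non-$\lambda'$-choosability does \emph{not} pull back along a homomorphism (locally injective or otherwise): if $\phi\colon G\to H$ is edge-preserving and $L_0$ is a bad $\lambda'$-assignment of $H$, an $L_0\circ\phi$-colouring of $G$ gives no $L_0$-colouring of $H$, so nothing prevents $G$ from being $L_0\circ\phi$-colourable. Concretely, a bipartite graph $H$ with large choice number admits a bad $2$-assignment $L_0$, yet a high-girth graph $G$ mapping homomorphically to $H$ (say a long even cycle, or any $1$-degenerate-plus-one-edge sparse graph) is typically $2$-choosable, so the pulled-back assignment is useless. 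This is the same phenomenon as in Erd\H{o}s' theorem itself: high-girth graphs are locally trees, hence locally list-colourable, so no local gadget or folding can certify non-colourability; a global counting/probabilistic argument is unavoidable. Your plan has the difficulty inverted: in the actual proof the $\lambda$-choosability of the construction is the easy half (the graph is a union of $(k_i-1)$-degenerate pieces $G_i$, one per part of $\lambda$, joined by edges that only connect different pieces, so one colours each $G_i$ from its own colour group), while the non-$\lambda'$-choosability is the hard half and is never obtained from the gadget of Theorem~\ref{thm-Zhu} at all.

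What the paper does instead: it builds a random sparse $k$-partite graph $G_0$ of girth at least $g$ with an expansion property (Lemma~\ref{G}: between any two sufficiently large subsets of distinct parts there are many edges), splits each vertex into $r$ copies with a random distribution of edges among the copies (Lemma~\ref{lem-split}), and glues onto each split vertex a high-girth $(k_i-1)$-degenerate graph $J_i$ that is not $(k_i-1)$-colourable (Kostochka--Ne\v{s}et\v{r}il). The bad $\lambda'$-assignment is constructed explicitly from colour groups $C'_j$ of size $2k'_j-1$, assigning each list of the family $\mathcal{L}$ to equally many copies of $J_i$; a hypothetical proper colouring then forces, via the expansion property and a pigeonhole ``occupation'' argument, disjoint sets $N_i$ with $\sum_{j\in N_i}k'_j\ge k_i$, i.e.\ $\lambda\le\lambda'$, a contradiction. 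None of your proposed machinery for step (b) (compactness/entropy counting, shift-graph indicators, constraint-transmitting gadgets) is developed enough to assess, but even granting it, the proposal still lacks any working mechanism for forcing non-$\lambda'$-choosability at high girth, so the argument as outlined cannot be completed along the route you describe.
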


\section{Proof of Theorem \ref{key}}

The proof of Theorem \ref{key} uses basic  probabilitic method.   One new ingredient in the proof is to split  vertices of a large girth graph appropriately and then add copies of some other graphs and ensure that the resulting graph still has large girth and some other required properties of a random graph.  

In the calculations in our proof, the following three inequalities involving binomial coefficients will be used:
\begin{itemize}
	\item[(1)] $\binom{a}{b}\le \left(\frac{ea}{b}\right)^b$;
\end{itemize}
If $ 0\le x< b $ and $ b+x< a  $, then
\begin{itemize}
	
	\item[(2)] $ \binom{a-x}{b}\binom{a}{b}^{-1}\le \left(\frac{a-b}{a}\right)^x<e^{-bx/a} ;$
	
	\item[(3)] $ \binom{a-x}{b-x}\binom{a}{b}^{-1}\le \left(\frac{b}{a}\right)^x .$
	
\end{itemize}

\begin{lemma}\label{G}
For any positive integers $ k,g,t $ and $0 < \epsilon < 1/4g$, there exists a $ k $-partite graph  $ G_0 $ with partite sets $ V_1,V_2,\ldots,V_k $, which has the following properties:

\begin{itemize}
\item[1.] All the parts have the same size, say $|V_i|=n$. 
\item[2.] The girth of $ G_0 $ is at least $ g $, 
\item[3.] For any $ 1\le i,j \le k $ with $ i\neq j $ and any subsets $ A \subseteq V_i , B \subseteq V_j $ with $ |A|,|B|\ge  \lfloor n/t \rfloor $, there are at least $ \frac{1}{2}n^{1+\epsilon} $ edges between $ A $ and $ B $.
\end{itemize} 

\end{lemma}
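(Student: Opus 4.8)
The plan is to apply the deletion method to a random $k$-partite graph in the ``sparse but not too sparse'' regime, in the spirit of Erd\H{o}s' probabilistic construction of high-girth graphs. Fix $k,g,t$ and $\epsilon<1/(4g)$, and let $n$ be a large integer to be chosen at the end; set $N=2n$. Let $H$ be the random $k$-partite graph with parts $W_1,\dots,W_k$, each of size $N$, in which every pair $\{u,v\}$ with $u\in W_i$, $v\in W_j$ and $i\ne j$ is an edge independently with probability $p=c\,t^2 n^{\epsilon-1}$, where $c$ is an absolute constant fixed below. I will show that for $n$ large, with positive probability $H$ simultaneously has few short cycles and is ``dense between linear-sized sets''; deleting one vertex from each short cycle and then a few more vertices to equalise the part sizes then produces $G_0$.

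For the cycles: the number of potential cycles of length $\ell$ in $H$ is at most $N^{\ell}$, each present with probability $p^{\ell}$, so the expected number of cycles of length less than $g$ is at most $\sum_{\ell=3}^{g-1}(Np)^{\ell}\le g\,(Np)^{g}$. Since $Np=2c\,t^2 n^{\epsilon}$ and $g\epsilon<1/4$, this expectation is $O(n^{g\epsilon})=o(n^{1/2})$, so by Markov's inequality, with probability at least $1/2$ the graph $H$ has at most $n^{1/2}$ cycles of length less than $g$. For the density: fix distinct $i,j$ and sets $A\subseteq W_i$, $B\subseteq W_j$ with $|A|=|B|=m:=\lfloor n/t\rfloor$. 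The number of edges of $H$ between $A$ and $B$ is a binomial random variable with mean $m^2p\ge 4n^{1+\epsilon}$, once $c$ is taken large enough (using $m\ge n/t-1$); so by a standard Chernoff-type lower-tail estimate (which can be derived from inequalities (1)--(3) together with $(1-p)^{m^2}\le e^{-pm^2}$) the probability that it is smaller than $\tfrac12 n^{1+\epsilon}$ is at most $e^{-\gamma n^{1+\epsilon}}$ for some constant $\gamma>0$. There are at most $k^2\binom{N}{m}^2\le k^2 4^{2n}$ choices of the quadruple $(i,j,A,B)$, and since $n^{1+\epsilon}/n=n^{\epsilon}\to\infty$, a union bound gives that for all sufficiently large $n$ this bad event has probability $o(1)$. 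Because the number of edges between two sets can only increase when the sets are enlarged, on the good event \emph{every} $A\subseteq W_i$ and $B\subseteq W_j$ with $|A|,|B|\ge m$ span at least $\tfrac12 n^{1+\epsilon}$ edges.

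Hence for $n$ large there is an outcome of $H$ satisfying both properties at once. Choose a set $F$ containing one vertex from each cycle of length less than $g$; then $|F|\le n^{1/2}$. The graph $H-F$ has girth at least $g$, is a subgraph of $H$, and each of its parts retains at least $N-n^{1/2}\ge n$ vertices. Delete further vertices arbitrarily so that each part has size exactly $n$; call the resulting parts $V_1,\dots,V_k$ and the resulting graph $G_0$. Then property~1 holds by construction; property~2 holds since $G_0$ is a subgraph of $H-F$; and property~3 follows from the density statement because any $A\subseteq V_i$, $B\subseteq V_j$ with $|A|,|B|\ge\lfloor n/t\rfloor$ are in particular subsets of $W_i,W_j$ of size at least $\lfloor n/t\rfloor$.

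The only real obstacle, and the reason a smallness hypothesis on $\epsilon$ is needed, is the tension in the choice of $p$. The union bound over the (exponentially many in $n$) pairs of linear-sized sets forces the expected number of edges between two such sets to be superlinear in $n$, hence $p=\Omega(n^{\epsilon-1})$ with $\epsilon>0$ strictly; but then the expected number of cycles of length less than $g$ is $\Omega(n^{g\epsilon})$, which must remain $o(n)$ so that the deletion step does not damage the part sizes, forcing $g\epsilon<1$. The hypothesis $\epsilon<1/(4g)$ lies comfortably inside this window, and what remains is the routine bookkeeping of the constants $c$ and $\gamma$ and the verification, via (1)--(3), of the two tail/first-moment estimates.
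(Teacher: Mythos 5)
Your proposal is correct in substance and follows the same Erd\H{o}s-style deletion strategy as the paper, but the execution differs at two points worth noting. First, the random model: you use independent edges (binomial model with $p=ct^2n^{\epsilon-1}$ and oversized parts $N=2n$), whereas the paper works with a uniformly random subgraph of the complete $k$-partite graph having exactly $m=\lfloor qn^{1+2\epsilon}\rfloor$ edges; your model lets you quote a standard Chernoff lower-tail bound, while the paper's fixed-edge-count model is exactly why it needs the hypergeometric-type inequalities (1)--(3) in its Claim 2. Your parenthetical remark that the Chernoff estimate ``can be derived from inequalities (1)--(3)'' is slightly off target, since those are stated for the fixed-size model; in your setting you should either invoke Chernoff directly or sum the binomial point probabilities as a first-moment bound (both routine), and similarly the potential-cycle count should be $(kNp)^{\ell}$ rather than $(Np)^{\ell}$, a harmless constant factor. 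Second, the repair step: you delete one \emph{vertex} per short cycle and then trim the parts from $2n$ down to $n$, relying on the fact that induced subgraphs preserve all edges between surviving sets, whereas the paper deletes one \emph{edge} per short cycle, which keeps the parts at exactly $n$ automatically but then requires comparing the number of deleted edges ($\le n^{2g\epsilon}$) against the guaranteed $n^{1+\epsilon}$ edges between any pair of large sets --- this comparison is where the hypothesis $\epsilon<1/4g$ enters for the paper, while in your version it enters only through keeping the cycle count small relative to the part sizes. Both routes are sound; yours trades the cleaner bookkeeping of part sizes for a more standard probabilistic toolkit, and the remaining work you defer (choice of $c$, $\gamma$) is indeed routine.
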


\begin{proof}
Let $ F $ be a complete $ k $-partite graph with partite set $ V_1,V_2,...,V_k $ and every part has size $ n $. Let $ q=\frac{k(k-1)}{2} $, then $ F $ has $ qn^2 $ edges. Let $ \mathcal{G} $ be the set of all subgraphs $ G $ of $ F $ with $ m=\lfloor qn^{1+2\epsilon} \rfloor $ edges. Then $ |\mathcal{G}|=\binom {qn^2}{m} $. In the following, $ n $ is assumed to be sufficiently large. We consider $ \mathcal{G} $ as a probability space with each member occurring with the same probability $ 1/|\mathcal{G}| $.

\begin{claim}\label{1}
The expected number of cycles of length less than $ g $ in a graph $ G\in \mathcal{G} $ is bounded by $ n^{-\epsilon}n^{2g\epsilon} $. Thus asymptotically almost all graphs from $ \mathcal{G} $ have at most $ n^{2g\epsilon} $ cycles of length $ \le g-1 $.
\end{claim}

\begin{proof}
	The expected number of cycles $ C_l $ of length $ l $ in a graph $ G\in \mathcal{G} $ is at most
	$$
	N_l=\binom{kn}{l}\frac{l!}{2l}\binom{qn^2-l}{m-l}\binom{qn^2}{m}^{-1}.$$
	By inequality (3), 
	$$\binom{qn^2-l}{m-l}\binom{qn^2}{m}^{-1}\le \left(\frac{m}{qn^2}\right)^l.
	$$
	Since $ m\le qn^{1+2\epsilon} $,
	$$ N_l\le \binom{kn}{l}\frac{l!}{2l}\left(\frac{m}{qn^2}\right)^l<\left(\frac{km}{qn}\right)^l\le k^ln^{2\epsilon l} .$$
	Therefore
	$$ \sum_{l=3}^{g-1}N_l<(g-3)k^{g-1}n^{2(g-1)\epsilon}<n^{\epsilon} n^{2(g-1)\epsilon}=n^{-\epsilon}n^{2g\epsilon} .$$
	Here we assume that $ n $ is large enough so that $ n^\epsilon>(g-3)k^{g-1} $.
	
	This implies that if $ \mathcal{G}_1 $ is the set of all graphs $ G\in \mathcal{G} $ with at most $ n^{2g\epsilon} $ cycles of length less than $ g $, then $ |\mathcal{G}_1|\ge (1-n^{-\epsilon})|\mathcal{G|} $.
	
\end{proof}

For a graph $G \in \mathcal{G}_1$, by deleting one edge from each cycle of length at most $g-1$ (and deleting at most $n^{2g\epsilon}$ edges in total), we obtain a graph of girth at least $g$. 

\begin{claim}\label{2}
Asymptotically almost all graphs from $ \mathcal{G} $ has the property that for any $ 1\le i,j \le k $ with $ i\neq j $ and any subsets $ A \subseteq V_i , B \subseteq V_j $ with $ |A|,|B|=\lfloor n/t\rfloor $, there is at least $ n^{1+\epsilon} $ edges between $ A $ and $ B $.
\end{claim}

\begin{proof}
	For an integer $ s\le n^{1+\epsilon} $, denote by $ M(s) $ the expected number (in a graph $ G\in \mathcal{G} $) of pairs $ A\subseteq V_i, B\subseteq V_j $ with $ i\neq j $ such that $ |A|=|B|=\lfloor n/t\rfloor  $, and there are exactly $ s $ edges connecting $ A $ and $ B $. Then 
	$$ M(s)=q\binom{n}{\lfloor \frac{n}{t} \rfloor}^2 \binom{\lfloor \frac{n}{t}\rfloor^2} {s} \binom {qn^2-\lfloor \frac{n}{t}\rfloor^2}{m-s}\binom {qn^2}{m}^{-1}. $$
	Replacing $ \binom {qn^2-\lfloor \frac{n}{t}\rfloor^2}{m-s} $ by $ \binom {qn^2-\lfloor \frac{n}{t}\rfloor^2}{m} $, applying inequalities (1),(2) we have
	
	$$ M(s)<q(et)^{2n/t}\left(\frac{n}{t}\right)^{2s}e^{-(1/t^2)n^{1+2\epsilon}}. $$
	Assume  $ n $ is large enough so that
	$$ e^{-(1/2t^2)n^{1+2\epsilon}}q(et)^{2n/t}<1 .$$
	Then $$ M(s)<n^{2s} e^{-(1/2t^2)n^{1+2\epsilon}}. $$
	Hence,
	$$\sum_{s<n^{1+\epsilon}} M(s)<\exp(-(1/2t^2)n^{1+2\epsilon}+3n^{1+\epsilon}\log n)<\exp(-(1/4t^2)n^{1+2\epsilon})<e^{-n}.$$

\end{proof}

 Combining Claim \ref{1} and Claim \ref{2}, we have a $ k $-partite graph $ G_0 $ of girth at least $ g $, each part $V_i$ has $n$ vertices, and for any $ 1\le i,j \le k $ with $ i\neq j $ and any subsets $ A \subseteq V_i , B \subseteq V_j $ with $ |A|,|B|\ge \lfloor n/t\rfloor $, there is at least $ n^{1+\epsilon}-n^{2g\epsilon}>\frac{1}{2}n^{1+\epsilon} $ edges between $ A $ and $ B $. This completes the proof of Lemma \ref{G}.
\end{proof}

We shall construct the graph $G$ in Theorem \ref{key} by using $G_0$ as a base. Another gadget needed for the construction of $G$ is the following   result, which was proved by   Kostochka and Ne\v {s}et\v{r}il \cite{KN}.

\begin{theorem}\cite{KN}
	For any positive integers $r, k, g$, there is an $r$-uniform  $k$-degenerate hypergraph with girth at least $ g $ that is not $ k $-colourable. In particular, there is a   $k$-degenerate  graph with girth at least $ g $ that is not $ k $-colourable.
\end{theorem}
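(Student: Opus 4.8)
\textbf{Proof proposal for the Kostochka--Ne\v{s}et\v{r}il theorem.}

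The plan is to build the $r$-uniform $k$-degenerate hypergraph $H$ of girth at least $g$ that is not $k$-colourable by a recursive ``amplification'' construction, following the strategy of iterated substitution familiar from the graph case (and originally due to Kostochka and Ne\v{s}et\v{r}il). The key invariant to maintain through the recursion is not merely non-$k$-colourability but a stronger, more robust property that survives the substitution step: for a suitable auxiliary parameter, I would track a hypergraph $H_j$ together with a distinguished independent set $I_j$ of vertices such that in every proper $k$-colouring of $H_j$, the colouring restricted to $I_j$ is constrained in a controlled way (for instance, some two vertices of $I_j$ must receive the same colour, or more generally the colouring of $I_j$ cannot be ``spread out''). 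The base case $H_0$ can be taken to be a single hyperedge (or a small gadget), which trivially has large girth, is $1$-degenerate, and already forces the vertices of that edge not to be all-distinct-coloured only in a weak sense; the recursion then boosts this.

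The main construction step is the substitution: given $H_j$ with its distinguished set $I_j$, I would take many disjoint copies of $H_j$ and ``glue'' them by identifying their distinguished vertices with the vertices of a large high-girth sparse host structure (a high-girth hypergraph or a bipartite incidence structure with no short cycles, whose existence is itself a probabilistic or explicit fact one proves separately, exactly in the spirit of Lemma~\ref{G} of the present paper). Concretely, one chooses the host so that any $k$-colouring of the glued object, when restricted to the copies of $I_j$, is forced by the host's combinatorial structure (a sunflower-free / high-girth design) to create a monochromatic host-edge, contradicting properness unless the colour count drops. Degeneracy is preserved because each copy of $H_j$ is $k$-degenerate and the gluing is along an independent set with bounded local overlap, so one can order the vertices of $H_{j+1}$ by first exposing the non-distinguished vertices of each copy (each has back-degree $\le k$ within its copy) and then the distinguished vertices last, each of which needs only $k$ earlier neighbours because the host has bounded appropriately-controlled degree around each vertex. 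Girth is preserved because both the copies of $H_j$ and the host have girth $\ge g$, and a short cycle in $H_{j+1}$ would have to alternate between a copy and the host, which a careful choice of host girth (roughly $g$ times the diameter of a copy) rules out.

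After finitely many steps $j=0,1,\ldots,N$ the constraint accumulated on the distinguished set is strong enough that $H_N$ admits no proper $k$-colouring at all; this $H=H_N$ is the desired hypergraph. For the ``in particular'' statement about graphs, one either runs the same argument with $r=2$ throughout, or observes that an $r$-uniform $k$-degenerate hypergraph of girth $\ge g$ can be turned into a graph by replacing each hyperedge with a suitable $k$-degenerate high-girth graph gadget that is monochromatic-forbidding (again an ordinary graph Erd\H{o}s-type object), checking once more that degeneracy and girth survive.

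\textbf{Main obstacle.} The delicate point is the simultaneous preservation of all three properties --- non-$k$-colourability (which wants dense, highly overlapping structure), $k$-degeneracy (which wants sparse, hierarchically orderable structure), and large girth (which wants no short cycles at all). The tension between ``forcing'' colour constraints and ``staying degenerate'' is the heart of the Kostochka--Ne\v{s}et\v{r}il result: the resolution is the choice of host structure in the substitution step, which must be sparse and high-girth yet combinatorially rich enough (a high-girth hypergraph design with no small sunflowers) that it cannot be ``$k$-coloured compatibly'' with the copies glued into it. Pinning down the exact parameters of this host --- its uniformity, the number of copies, and how the distinguished set $I_j$ must grow with $j$ --- and verifying the girth bound for the alternating cycles is where the real work lies; everything else is bookkeeping along the recursion.
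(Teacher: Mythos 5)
The paper offers no proof of this statement: it is quoted verbatim from Kostochka and Ne\v{s}et\v{r}il \cite{KN} and used as a black box, so there is nothing internal to compare your argument against. Judged on its own terms, your sketch correctly identifies the strategy that \cite{KN} actually uses --- a Descartes-style recursive substitution into a sparse high-girth host --- but as written it has genuine gaps rather than omitted bookkeeping. The central one is that your inductive invariant is never pinned down: ``the colouring of $I_j$ cannot be spread out'' is not a statement one can carry through an induction, and your base case (``forces the vertices of that edge not to be all-distinct-coloured only in a weak sense'') inherits the same vagueness. The induction that works is on the number of colours, carrying the full package of properties: $D_1$ is a single ($r$-uniform) edge, which is $1$-degenerate, has infinite girth, and is not $1$-colourable; given $D_k$ on $n_k$ vertices with girth $\ge g$, $k$-degenerate and not $k$-colourable, one takes an $n_k$-uniform hypergraph $H$ of girth $\ge g$ that is sparse (to control degeneracy) and not $(k+1)$-colourable, and joins a fresh copy of $D_k$ to each hyperedge of $H$ by a perfect matching. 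The non-colourability argument is then completely concrete: in any proper $(k+1)$-colouring, $H$ has a monochromatic hyperedge $e$ (say in colour $c$), the matching forbids $c$ on the copy of $D_k$ attached to $e$, and that copy would be $k$-coloured, a contradiction. Your phrase ``forced \ldots\ to create a monochromatic host-edge'' points at this, but you attribute it to an unspecified ``sunflower-free design'' property rather than to the one property that does the work, namely that the host hypergraph itself is not $(k+1)$-colourable --- which is exactly why the theorem is stated for $r$-uniform hypergraphs first: the hypergraph version (proved probabilistically, \`a la Erd\H{o}s--Hajnal, with short cycles deleted) is the host supplying the graph recursion.

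Two further points would need repair even granting the right invariant. First, gluing by \emph{identification} of distinguished vertices with host vertices, as you propose, makes the degeneracy and girth analysis harder than the perfect-matching attachment (identification can multiply degrees and create short cycles through shared vertices); the matching version keeps each copy of $D_k$ meeting the host in a single edge per vertex, so a cycle not contained in one copy projects to a closed walk in $H$ and the girth bound is immediate --- no condition like ``host girth roughly $g$ times the diameter of a copy'' is needed or correct. Second, your degeneracy ordering (``expose the non-distinguished vertices of each copy first, then the distinguished vertices last'') is stated backwards for the quantity you need to bound; the clean argument deletes, in any subhypergraph, either a low-degree vertex inside some copy of $D_k$ (which exists since $D_k$ is $k$-degenerate and each such vertex gains at most one matching edge) or a low-degree vertex of the host (which exists since the host is chosen sparse). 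None of these are unfixable, but they are exactly the points where the proof lives, and the proposal defers all of them.
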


Assume $ \lambda = (k_1,k_2,..., k_q)$. For $i=1,2,\ldots, q$, let $J_i$ be a $(k_i-1)$-degenerate graph of girth $g$ which is not $(k_i-1)$-colourable. 
  By adding isolated vertices, we may assume that all $J_i$ have the same number of vertices, say $|V(J_i)|=r$. Let $[r]=\{1,2,\cdots,r\} $. We shall split each vertex $v$ of $G_0$ into a set $S_v = \{v\} \times [r]$ of $r$ vertices  and distribute the edges incident with $v$ to these $r$ vertices uniformly randomly. For each vertex $v \in V_i$,  we add a copy of $J_i$ with vertex set $S_v$.  We shall show that with positive probability, the resulting random graph has some nice property (stated in Lemma \ref{lem-split} below).
  
  Let $G$ be such a resulting graph. 
Let $G_i$ be the subgraph of $G$ induced by $V_i \times [r]$. So $G_i$ consists of $n$ vertex disjoint copies of $J_i$. Hence $G_i$ is $(k_i-1)$-degenerate, and is $k_i$-choosable. As a consequence the graph $G$ is $\lambda$-choosable. 

To see that $G$ has girth at least $g$, let $C$ be a cycle in $G$. If $C$ is contained in one copy of $J_i$ for some $i$, then $C$ has length at least $g$, as $J_i$ has girth at least $g$.  For any other cycle $ C $ in $ G $, contracting each copy of $ J_i $ to a single vertex yields a closed walk $ C' $ in $G_0$. Since there is at most one edge between a copy of $J_i$ and a copy of $J_{i'}$  
in $ G $, each edge is used only once in $ C' $. Hence $ C' $ contains a cycle in $G_0$, which has length at least $ g $, as $G_0$ has girth at least $g$. So 
  $ C $ has length at least $ g $ and $G$ has girth at least $g$.

Assume $ \lambda'=(k_1',k_2',...,k_p') $ and $\lambda \not\le \lambda'$. 
We shall prove that when constants $n,t$ are chosen approriately, then $G$ is not $\lambda'$-choosable.

For this purpose, we need to show that we can split each vertex $v$ of $G_0$ into a set $S_v = \{v\} \times [r]$ of $r$ vertices, so that the resulting graph $G'$ has some nice properties.

\begin{lemma}
	\label{lem-split}
	Let $G_0$ be the graph as in Lemma \ref{G}. There exists a mapping $ f:E(G_0)\rightarrow [{r}]\times [{r}] $ such that the following holds:
	\begin{itemize}
		\item  For any $g: V(G_0) \to [r]$, for any $ 1\le i<j \le k $, any subsets $ A \subseteq V_i , B \subseteq V_j $ with $ |A|,|B| \ge \lfloor n/t\rfloor $,   there is at least one edges $ e=xy $ with $ x\in A, y \in B $ such that $ f(e)=(g(x),g(y))$.
	\end{itemize} 
\end{lemma}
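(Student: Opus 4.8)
The plan is to choose $f$ at random and apply the first moment method. Concretely, I would let $f(e)$, for each edge $e\in E(G_0)$, be an independent uniformly random element of $[r]\times[r]$ (using the convention implicit in the statement that the first coordinate of $f(e)$ is associated with the endpoint of $e$ lying in the lower-indexed part). The goal is to show that this random $f$ has the required property with positive probability, once $n$ is taken sufficiently large with $k,r,t,\epsilon$ fixed; recall that Lemma \ref{G} lets us choose $n$ as large as we wish.

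Fix a function $g:V(G_0)\to[r]$, indices $1\le i<j\le k$, and subsets $A\subseteq V_i$, $B\subseteq V_j$. It suffices to verify the property for $|A|=|B|=s:=\lfloor n/t\rfloor$, since for $|A|,|B|\ge s$ one may pass to arbitrary subsets $A_0\subseteq A$, $B_0\subseteq B$ of size exactly $s$, and any edge found between $A_0$ and $B_0$ is in particular an edge between $A$ and $B$. By property 3 of Lemma \ref{G}, there are at least $\tfrac12 n^{1+\epsilon}$ edges of $G_0$ between $A$ and $B$. For each such edge $e=xy$ with $x\in A$, $y\in B$, the event $f(e)=(g(x),g(y))$ has probability $1/r^2$, and these events are mutually independent because $f$ is chosen independently on distinct edges. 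Hence the probability that no edge $e=xy$ between $A$ and $B$ satisfies $f(e)=(g(x),g(y))$ is at most
$$\left(1-\frac{1}{r^2}\right)^{n^{1+\epsilon}/2}\le \exp\!\left(-\frac{n^{1+\epsilon}}{2r^2}\right).$$

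Next I would take a union bound over all relevant quadruples $(g,i,j,A,B)$. There are at most $r^{kn}$ choices of $g$, at most $\binom{k}{2}$ choices of $(i,j)$, and at most $\binom{n}{s}^2\le 2^{2n}$ choices of the pair $(A,B)$. Therefore the probability that the random $f$ fails the property for some quadruple is at most
$$r^{kn}\binom{k}{2}2^{2n}\exp\!\left(-\frac{n^{1+\epsilon}}{2r^2}\right)=\exp\!\left(kn\ln r+2n\ln 2+\ln\binom{k}{2}-\frac{n^{1+\epsilon}}{2r^2}\right).$$
Since $k$ and $r$ are constants and $\epsilon>0$, the term $-n^{1+\epsilon}/(2r^2)$ dominates the terms that are linear in $n$, so this bound tends to $0$ as $n\to\infty$, and in particular is strictly less than $1$ for all sufficiently large $n$. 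For such $n$ there exists $f:E(G_0)\to[r]\times[r]$ with the desired property, which proves the lemma.

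I expect the only genuine point of care to be the bookkeeping in the union bound: one must check that the number of constraints — in particular the factor $r^{kn}$ coming from ranging over all maps $g:V(G_0)\to[r]$, which a priori looks dangerous — contributes only an exponential with exponent linear in $n$, so that it is absorbed by the super-linear saving $\exp(-\Theta(n^{1+\epsilon}))$ provided by the $\tfrac12 n^{1+\epsilon}$ independent trials from Lemma \ref{G}. No idea beyond the basic probabilistic method is needed here; indeed the large girth and the ``many edges between any two large sets'' property of $G_0$ were arranged earlier precisely so that this final step succeeds.
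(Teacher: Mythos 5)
Your proof is correct and is essentially the same as the paper's: choose $f$ uniformly and independently on each edge, use property 3 of Lemma \ref{G} to bound the probability that a fixed $(g,A,B)$ fails by $\left(1-\frac{1}{r^2}\right)^{\frac{1}{2}n^{1+\epsilon}}$, and absorb the union bound (including the $r^{kn}$ factor from the choices of $g$) into the super-linear exponent for large $n$. The only cosmetic difference is that you make explicit the reduction from $|A|,|B|\ge\lfloor n/t\rfloor$ to equality, which the paper leaves implicit.
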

 \begin{proof}
 	Let $f: E(G_0) \to [r] \times [r]$ be a random mapping, where for each edge $e=xy$, and  $g: V(G_0) \to  [r] $, the probability that $f(e) = (g(x),g(y))$ is $1/r^2$.

 	 For two subsets $ A\subseteq V_i,B\subseteq V_j$ with  $i<j $, for $g: V(G_0) \to [r]$, we say the pair $ (A,B) $ is \emph{bad with respect to $g$}   if $ |A|= |B|=\lfloor n/t\rfloor $ and     there is no edges $ e=xy $ with $ x\in A, y \in B $ such that $ f(e)=(g(x),g(y)) $. We say $A,B$ is {\em bad} if $(A,B)$ is bad with respect to some $g: V(G_0) \to [r]$.
 	 To prove Lemma \ref{lem-split}, it suffices  to show that with positive probability, there is no bad pair. 
 	 
 	 By Lemma \ref{G}, for given $g: V(G_0) \to [r]$,  for each subsets $ A \subseteq V_i , B \subseteq V_j $ $ (i < j) $ with $ |A|,|B|= \lfloor n/t\rfloor $, there are at least $ \frac{1}{2}n^{1+\epsilon} $ edges between $ A $ and $ B $. For each  $a,b \in [r] $, and for each edge $e=xy$ with $x\in A$ and $y\in B$, the probability that $f(e) \ne (g(x),g(y))$ is $1- \frac{1}{r^2}$.  
 	 Thus the probability that $ (A,B) $ is bad with respect to $g$ is 
 	 $$\left(1-\frac{1}{{r}^2}\right)^{\frac{1}{2}n^{1+\epsilon}}, $$
 	 Let $ P $ be the probability that there exists a bad pair.  Then 
 	 $$ P\le q\binom{n}{\lfloor\frac{n}{k}\rfloor}^2{r}^{kn}\left(1-\frac{1}{{r}^2}\right)^{\frac{1}{2}n^{1+\epsilon}}<q(ek{r}^k)^{2n}\left(1-\frac{1}{{r}^2}\right)^{\frac{1}{2}n^{1+\epsilon}}. $$
 	 Assume that $ n $ is large enough so that
 	 $$ q(ek{r}^k)^{2n}\left(1-\frac{1}{{r}^2}\right)^{\frac{1}{4}n^{1+\epsilon}}<1. $$
 	 Hence
 	 $$ P< \left(1-\frac{1}{{r}^2}\right)^{\frac{1}{4}n^{1+\epsilon}}<1. $$
 	 Hence with positive probability, there is no bad pair, and the required  mapping $f$ exists. This completes the proof of Lemma \ref{lem-split}.  
 \end{proof}

Let $f: E(G_0) \to [r] \times [r]$ be the mapping in Lemma \ref{lem-split}.
Let $G'$ be the graph with vertex set $V(G_0) \times [r]$
in which $(x,s)$ is adjacent to $(y,t)$ if $e=xy \in E(G_0)$ and $x\in V_i,y\in V_j$ with $i<j$ and $f(e) = (s,t)$. 

Let $G$ be obtained from $G'$ by taking,  for  each $i=1,2,\ldots, q$ and for each vertex $v \in V_i$, one copy of $J_i$  and   identify the vertex set of this copy of $J_i$ with $\{v\} \times [r]$.  
 
Now we show that for appropriate chosen constants $n,t$, 
$G$ is not $\lambda'$-choosable. 

 Let $ C_1',C_2',...,C_p' $ be disjoint colour sets such that   $ |C_j'|=2k_j'-1 $ for $j=1,2,\ldots,p$. Let
$$ \mathcal{L}=\left\{\bigcup_{j=1}^p S_j: S_j\in \binom{C_j'}{k_j'}\right\} $$ 
Here $ \binom{C_j'}{k_j'} $ the family of all $ k_j' $-subsets of $ C_j' $. So each element of $\mathcal{L}$ is a $k'$-set of colours, where $k' = k'_1+k'_2+ \ldots +k'_p$.

 Let $$ t=2{r}|\mathcal{L}|k'.$$
 We construct  a $ \lambda' $-assignment of $ G $ as follows:
\begin{itemize}
	\item For each vertex $v$ of $G_0$, all the vertices in $\{v\} \times [r]$  is assigned the same list from $ \mathcal{L} $.   
	\item For each $i=1,2,\ldots,q$,  each list from $ \mathcal{L} $ is assigned to exactly $   \frac{n}{|\mathcal{L}|}   $ copies of $ J_i $ in $G_i$. (We assume that $n$ is chosen to be a multiple of $|\mathcal{L}|$).
\end{itemize}
Recall that $G_i$ is the subgraph of $G$ induced by $V_i \times [r]$, which consists of $n$ copies of $J_i$. 

It follows from the definition that $L$ is a $\lambda'$-assignment. We shall show that $G$ is not $L$-colourable, and hence $G$ is not $\lambda'$-choosable.

Assume to the contrary that   there is an $ L $-colouring $\phi$ of $ G $. For each index $ j\in \{1,2,...,p\} $, we say $ C_j' $ is \emph{occupied} by $ G_i $ if there are at least $ k_j' $ colours in $ C_j' $ such that each of them is used by at least $ \lceil n{r}/t\rceil $ vertices in $ G_i $. For each $ i \in \{1,2,...,q\}$, let
$$ N_i=\{j:C_j'\text{ is occupied by } G_i \} .$$

\begin{claim}
	For any $ i,i'\in \{1,2,...,q\} $ and $ i< i' $, we have $ N_i\cap N_{i'}=\emptyset $.
\end{claim}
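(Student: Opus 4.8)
The claim asserts that the index sets $N_i$ (recording which colour classes $C'_j$ are ``occupied'' by the copies of $J_i$ sitting on $V_i$) are pairwise disjoint. Here is how I would argue it.

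\medskip

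\textbf{Plan.} Suppose for contradiction that some index $j$ lies in $N_i \cap N_{i'}$ with $i < i'$. By definition of ``occupied'', there is a set $D \subseteq C'_j$ of $k'_j$ colours, each used by at least $\lceil nr/t\rceil$ vertices of $G_i$, and similarly a set $D' \subseteq C'_j$ of $k'_j$ colours each used by at least $\lceil nr/t\rceil$ vertices of $G_{i'}$. Since $|C'_j| = 2k'_j - 1 < 2k'_j$, the sets $D$ and $D'$ must intersect; fix a colour $\alpha \in D \cap D'$. So $\alpha$ is used frequently by both $G_i$ and $G_{i'}$.

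\medskip

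\textbf{From ``frequently used colour'' to large monochromatic vertex sets in $G_0$.} Recall $G_i$ consists of $n$ disjoint copies of $J_i$, one copy $S_v = \{v\}\times[r]$ for each $v \in V_i$; the colour $\alpha$ is used by at least $\lceil nr/t\rceil$ vertices of $G_i$, i.e.\ on at least $\lceil nr/t\rceil$ of the $nr$ vertices in $V_i\times[r]$. Let $A \subseteq V_i$ be the set of those $v$ for which some vertex of $S_v$ receives colour $\alpha$; since each $S_v$ has only $r$ vertices, $|A| \ge \lceil nr/t\rceil / r \ge n/t$, so $|A| \ge \lfloor n/t\rfloor$. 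Likewise obtain $B \subseteq V_{i'}$ with $|B| \ge \lfloor n/t \rfloor$. Now define $g : V(G_0) \to [r]$ by picking, for each $v \in A$, an index $g(v)$ with $(v,g(v))$ coloured $\alpha$, for each $v \in B$ similarly, and arbitrarily elsewhere. Apply Lemma \ref{lem-split} to this $g$: there is an edge $e = xy \in E(G_0)$ with $x \in A$, $y \in B$, and $f(e) = (g(x),g(y))$. By the construction of $G'$, this means $(x,g(x))$ and $(y,g(y))$ are adjacent in $G'$ (hence in $G$). But both of these vertices are coloured $\alpha$ by $\phi$, contradicting that $\phi$ is a proper colouring. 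This contradiction proves $N_i \cap N_{i'} = \emptyset$.

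\medskip

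\textbf{Main obstacle.} The only delicate point is the bookkeeping that turns the ``at least $\lceil nr/t\rceil$ vertices'' condition into sets $A,B$ of size at least $\lfloor n/t\rfloor$ in the partite sets of $G_0$ — one must be careful that dividing by the blow-up factor $r$ and the floor/ceiling operations leave enough room; the choice $t = 2r|\mathcal{L}|k'$ is comfortably large enough that this causes no trouble, and in fact gives substantial slack. Everything else is a direct invocation of Lemma \ref{lem-split} together with the fact that two $k'_j$-subsets of a $(2k'_j-1)$-set must overlap. No probability is needed here; the randomness was already spent in proving Lemma \ref{lem-split}.
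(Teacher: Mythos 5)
Your proof is correct and follows essentially the same route as the paper's: pigeonhole on $|C'_j|=2k'_j-1$ to get a colour used at least $\lceil nr/t\rceil$ times in both $G_i$ and $G_{i'}$, pass to sets $A\subseteq V_i$, $B\subseteq V_{i'}$ of size at least $\lfloor n/t\rfloor$, define $g$ by selecting an $\alpha$-coloured index in each blown-up vertex, and invoke Lemma \ref{lem-split} to produce a monochromatic edge, contradicting the properness of $\phi$. The only difference is that you spell out the division-by-$r$ bookkeeping that the paper leaves implicit.
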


\begin{proof}
	Assume $ N_i\cap N_{i'}\neq \emptyset $, say $ j\in N_i\cap N_{i'} $. By definition, there are at least $ k_j' $ colours in $ C_j' $ such that each of them is used by at least $ \lceil n{r}/t\rceil $ vertices in $ G_i $, and at least $ k_j' $ colours in $ C_j' $ such that each of them is used by at least $ \lceil n{r}/t\rceil $ vertices in $ G_{i'} $. As  $|C_j'|=2k_j'-1 $, there is a colour $ c\in C_j' $ used by at least $ \lceil n{r}/t\rceil $ vertices in $ G_{i} $ and also at least $ \lceil n{r}/t\rceil $ vertices in $ G_{i'} $. Thus there are at least $ \lfloor n/t\rfloor $ copies of $ J_i $ containing a vertex coloured by $ c $ in $ G_i $, and  at least $\lfloor n/t\rfloor $ copies of $ J_{i'} $ containing a vertex coloured by $ c $ in $ G_{i'} $.
	
	Let 
	\begin{eqnarray*}
	 A &=& \{v \in V_i: \text{ some vertex in $\{v\} \times [r]$ is coloured by $c$}\}, \\
	 B &=& \{v \in V_{i'}: \text{ some vertex in $\{v\} \times [r]$ is coloured by $c$}\}.
	\end{eqnarray*}
	Then $ |A|,|B|\ge \lfloor n/t\rfloor $. Let  $ g:  V(G_0) \rightarrow [r] $ be any mapping such that for all $x \in A \cup B$,    $ g(x)= a$ for some $a \in [r]$ such that $\phi (x,a) = c$. By Lemma \ref{lem-split},  there exists an edge $e=xy$ of $G_0$ such that $f(e) = (g(x), g(y))$. Hence $G$ has an edge connecting $(x,g(x))$ and $(y, g(y))$. But both $(x,g(x))$ and $(y, g(y))$ are coloured by $c$,  a contradiction.
\end{proof}

\begin{claim}
	For each index $ i\in \{1,2,...,q\} $, $ \sum_{j\in N_i}k_j'\ge k_i $.
\end{claim}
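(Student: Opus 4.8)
The plan is to argue by contradiction: suppose that for some $i\in\{1,2,\ldots,q\}$ we have $\sum_{j\in N_i}k_j'\le k_i-1$. Call a colour $c$ \emph{popular} if $\phi$ uses it on at least $\lceil nr/t\rceil$ vertices of $G_i$, and let $P$ denote the set of popular colours. By the definition of $N_i$, for every index $j\notin N_i$ the group $C_j'$ contains at most $k_j'-1$ popular colours, hence at least $(2k_j'-1)-(k_j'-1)=k_j'$ colours outside $P$; so we may choose $S_j\in\binom{C_j'}{k_j'}$ with $S_j\cap P=\emptyset$. For $j\in N_i$ choose $S_j\in\binom{C_j'}{k_j'}$ arbitrarily. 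Then $\ell:=\bigcup_{j=1}^{p}S_j$ belongs to $\mathcal{L}$, and
$$|\ell\cap P|=\sum_{j\in N_i}|S_j\cap P|\le\sum_{j\in N_i}k_j'\le k_i-1.$$

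Next I would use the fact that $J_i$ is not $(k_i-1)$-colourable. Consider the $n/|\mathcal{L}|$ copies of $J_i$ in $G_i$ that are assigned the list $\ell$; they are pairwise vertex-disjoint, and $\phi$ restricts to a proper colouring of each of them using colours from $\ell$. If one of these copies were coloured using only colours of $\ell\cap P$, that would be a proper colouring of $J_i$ with at most $k_i-1$ colours, which is impossible. Hence each of these $n/|\mathcal{L}|$ disjoint copies contains a vertex whose colour lies in $\ell\setminus P$, that is, a vertex receiving a non-popular colour, so at least $n/|\mathcal{L}|$ vertices of $G_i$ receive non-popular colours. On the other hand, every colour used on $G_i$ lies in $\bigcup_{j=1}^{p}C_j'$, a set of size $\sum_{j=1}^{p}(2k_j'-1)=2k'-p$, and by definition each non-popular colour is used on at most $\lceil nr/t\rceil-1<nr/t$ vertices of $G_i$; therefore the number of vertices of $G_i$ carrying a non-popular colour is strictly less than $(2k'-p)\,nr/t=\frac{2k'-p}{2k'}\cdot\frac{n}{|\mathcal{L}|}\le\frac{n}{|\mathcal{L}|}$, where we used $t=2r|\mathcal{L}|k'$. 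This contradicts the previous sentence, and the contradiction establishes $\sum_{j\in N_i}k_j'\ge k_i$.

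The two substantive ingredients are the opposite ends of this squeeze. The lower estimate rests entirely on $J_i$ not being $(k_i-1)$-colourable: this is precisely what turns ``the list $\ell$ meets $P$ in fewer than $k_i$ colours'' into ``each of the $n/|\mathcal{L}|$ copies assigned $\ell$ must waste a vertex on a non-popular colour'', and it is the reason the construction insists that every list of $\mathcal{L}$ appear on a fixed positive fraction of the copies of each $J_i$. The upper estimate is the only place where the specific constant $t=2r|\mathcal{L}|k'$ is used: $t$ is chosen large enough that the total number of vertices of $G_i$ that can possibly receive a non-popular colour stays strictly below $n/|\mathcal{L}|$. I expect the main obstacle to be bookkeeping rather than ideas — one has to track, group by group, how many popular colours a chosen list $\ell\in\mathcal{L}$ may contain — but once the definition of ``occupied'' is unpacked this way the argument closes at once.
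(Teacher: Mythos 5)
Your proof is correct and is essentially the paper's argument: you build the same special list from $k_j'$ rarely-used colours of each non-occupied group $C_j'$, use the fact that this list is assigned to $n/|\mathcal{L}|$ disjoint copies of $J_i$ together with the choice $t=2r|\mathcal{L}|k'$ to bound the vertices carrying rarely-used colours by fewer than $n/|\mathcal{L}|$, and invoke the non-$(k_i-1)$-colourability of $J_i$. The only difference is presentational (you argue by contradiction and count all non-popular colours over $G_i$, while the paper directly exhibits one copy coloured entirely within the occupied groups), so no further comment is needed.
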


\begin{proof}
	For each $ j\notin N_i $, there is a set $ D_j $ of $ k_j' $ colours in $ C_j' $, and each colour in $ D_j $ is used by less than $ \lceil n{r}/t\rceil $ vertices in $ G_i $. Let $ L_0=\cup_{j=1}^{p}S_j^0\in \mathcal{L} $ be a list such that $ S_j^0=D_j $ for each $ j\notin N_i $ and $S_j^0$ is an arbitrary $k'_j$-subset of $C'_j$ for $j \in N_i$. 
	
	By the definition of $L$,  there exists $X \subseteq V_i$ such that 
	$$|X| \ge  \lfloor n/|\mathcal{L}|\rfloor $$
	and $$L(x,s)=L_0, \ \forall (x,s) \in X \times [r].$$ 
	 Let 
	 $$Z = \{(x,s) \in X \times [r]: \phi(x,s) \in   \cup_{j\notin N_i} D_j \}.$$
	 As each colour in  $\cup_{j\notin N_i} D_j$ is used by   less than $ \lceil n{r}/t\rceil $ vertices in $ G_i $, we conclude that 
	$$|Z| <  \left\lceil \frac{n{r}}{t}\right\rceil \sum_{j\notin N_i}k_j'<\left\lfloor \frac{n}{|\mathcal{L}|}\right\rfloor .$$
	So there exists $x \in X$ such that all vertices in 
	$\{x\} \times [r]$ are coloured by colours in   $ \cup_{j\in N_i} S_j^0$. Since $ J_i $ is not $ (k_i-1) $-colourable, we conclude that
	$$ |\bigcup_{j\in N_i} S_j^0|=\sum_{j\in N_i} k_j'\ge k_i. $$	
	
	Let $ \lambda''=\{k_1'',k_2'',...,k_q''\} $, where $ k_i''= \sum_{j\in N_i} k_j'$. Then $ \lambda'' $ is obtained from $\lambda$ by increasing some parts of $\lambda$, and $\lambda'$ is a refinement of $ \lambda'' $. Hence $ \lambda\le \lambda' $, which is in contrary to our assumption.	
\end{proof} 

Note that the structure of $G$ constructed in the proof of Theorem \ref{key} relies more on $\lambda$. The role of $\lambda'$ is only used in choosing $t$ and $n$. Thus the same proof actually proves the following stronger result.

\begin{theorem}
	\label{key2}
	Assume $\lambda$ and $\lambda_i$ ($i=1,2,\ldots, p$) are partitions of integers and $\lambda \not\le \lambda_i$ for each $i=1,2,\ldots, p$. Then for any positive integer $g$,  there exists a graph $G$ of girth $g$ which is $\lambda$-choosable, but not $\lambda_i$-choosable for $i=1,2,\ldots, p$. 
\end{theorem}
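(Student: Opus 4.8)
\textbf{Proof proposal for Theorem \ref{key2}.}

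The plan is to observe that the construction of $G$ in the proof of Theorem \ref{key} is entirely driven by $\lambda$, not by $\lambda'$: the graph $G_0$ from Lemma \ref{G} depends only on $k$, $g$, $t$, and $\epsilon$; the gadgets $J_1,\ldots,J_q$ depend only on the parts of $\lambda$; and the split map $f$ from Lemma \ref{lem-split} depends only on $G_0$ and $r$. The only place where $\lambda'$ enters is in the choice of the constants $t$ and $n$, where we set $t = 2r|\mathcal L|k'$ with $\mathcal L$ built from colour groups $C'_j$ of sizes $2k'_j-1$, and require $n$ to be a multiple of $|\mathcal L|$. So first I would fix the partition $\lambda = (k_1,\ldots,k_q)$, pick the degenerate high-girth non-colourable gadgets $J_i$ as before, and note that $G_i$ is $(k_i-1)$-degenerate hence $k_i$-choosable, so any graph $G$ obtained by the split-and-attach procedure is $\lambda$-choosable and (by the same cycle-contraction argument) has girth at least $g$, regardless of which $\lambda_i$ we later want to defeat.

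Next I would handle the finitely many forbidden partitions simultaneously. For each $i \in \{1,\ldots,p\}$ write $\lambda_i = (k^{(i)}_1, \ldots, k^{(i)}_{p_i})$, let $k^{(i)} = \sum_j k^{(i)}_j$, and for each $i$ introduce disjoint colour groups $C^{(i)}_1, \ldots, C^{(i)}_{p_i}$ with $|C^{(i)}_j| = 2 k^{(i)}_j - 1$, the corresponding family $\mathcal L_i$ of $k^{(i)}$-sets, and the quantity $t_i = 2 r |\mathcal L_i| k^{(i)}$. Now set $t = \max_{1\le i\le p} t_i$ (or, to be safe, $t = \sum_i t_i$) and take $n$ to be a common multiple of all the $|\mathcal L_i|$ and large enough that every inequality invoked in Lemma \ref{G}, Lemma \ref{lem-split}, and the two Claims holds. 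With these constants, build $G_0$, the split map $f$, and hence $G$ exactly once. For each individual $i$, the argument that $G$ is not $\lambda_i$-choosable is then verbatim the argument from the proof of Theorem \ref{key}: the $\lambda_i$-assignment $L^{(i)}$ assigns to each fibre $\{v\}\times[r]$ a common list from $\mathcal L_i$, distributed so that each list goes to exactly $n/|\mathcal L_i|$ copies of $J_i$; assuming an $L^{(i)}$-colouring, one defines the occupied index sets $N_i$, uses Lemma \ref{lem-split} (valid because $\lfloor n/t \rfloor \le \lfloor n/t_i\rfloor$, so the relevant sets $A,B$ are still large enough) to show the $N$-sets for distinct blocks are disjoint, uses the non-$(k_i-1)$-colourability of $J_i$ together with $|Z| < \lfloor n/|\mathcal L_i|\rfloor$ to get $\sum_{j\in N_i} k^{(i)}_j \ge k_i$, and concludes $\lambda \le \lambda_i$, contradicting the hypothesis.

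The main point to get right — and the only real ``obstacle'' — is the bookkeeping that a single graph $G$ works for all $p$ forbidden partitions at once. Concretely I need to check that the threshold $\lfloor n/t\rfloor$ used in constructing $G_0$ and $f$ is $\le$ the thresholds $\lfloor n/t_i\rfloor$ demanded by each $\lambda_i$-argument, which holds as soon as $t \ge t_i$ for all $i$; this is why taking $t$ to be the max (or sum) of the $t_i$ is the clean choice. I also need $n$ large enough to simultaneously satisfy the finitely many ``$n$ sufficiently large'' conditions coming from Claim \ref{1}, Claim \ref{2}, Lemma \ref{lem-split}, and each of the $p$ instances of the final Claims — but since there are only finitely many such conditions this is immediate. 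Everything else is literally the same computation as before, so no new estimates are needed; the theorem follows by quoting the proof of Theorem \ref{key} $p$ times after the constants have been fixed uniformly.
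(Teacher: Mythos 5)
Your proposal is correct and matches the paper's own argument: the paper proves Theorem \ref{key2} exactly by observing that the construction of $G$ depends only on $\lambda$, with $\lambda'$ entering only through the choice of the constants $t$ and $n$, so one graph defeats all $\lambda_i$ simultaneously. Your explicit bookkeeping (taking $t \ge t_i$ for all $i$ and $n$ a sufficiently large common multiple of the $|\mathcal{L}_i|$) is just a more detailed write-up of the same idea.
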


On the other hand, the following question remains open. 

\begin{question}
	\label{q1}
	Assume $\lambda_i \not\le \lambda$ for $i=1,2, \ldots, p$. Is it true that there exists a graph $G$ which is $\lambda_i$-choosable for $i=1,2,\ldots, p$ but not $\lambda$-choosable? 
	\end{question}

If the answer to Question \ref{q1} is "yes", then a natural next question is whether we can further require the graph $G$ to have large girth.

	\end{document}